\pgfplotsset{compat=1.17}
\theoremstyle{plain} 
\newtheorem{theorem}{Theorem} 
\newtheorem{lemma}[theorem]{Lemma} 
\theoremstyle{remark}
\newtheorem*{question}{Question}
\DeclareMathOperator{\supp}{supp} 
\DeclareMathOperator{\PW}{PW}
\begin{document} 
\title[The Nehari problem for the Paley--Wiener space of a disc]{The Nehari problem for \\ the Paley--Wiener space of a disc} 
\date{\today} 

\author{Ole Fredrik Brevig} 
\address{Department of Mathematics, University of Oslo, 0851 Oslo, Norway} 
\email{obrevig@math.uio.no}

\author{Karl-Mikael Perfekt} 
\address{Department of Mathematical Sciences, Norwegian University of Science and Technology (NTNU), NO-7491 Trondheim, Norway} 
\email{karl-mikael.perfekt@ntnu.no}

\begin{abstract}
	There is a bounded Hankel operator on the Paley--Wiener space of a disc in $\mathbb{R}^2$ which does not arise from a bounded symbol.
\end{abstract}

\subjclass[2020]{Primary 47B35. Secondary 42B35.}

\maketitle

\section{Introduction}
Let $\mathbb{D}$ be the unit disc in $\mathbb{R}^2$. The Paley--Wiener space $\PW(\mathbb{D})$ is the subspace of $L^2(\mathbb{R}^2)$ comprised of functions $f$ whose Fourier transforms $\widehat{f}$ are supported in $\overline{\mathbb{D}}$. For a tempered distribution $\varphi$ we consider the Hankel operator $\mathbf{H}_\varphi$ defined by the equation
\begin{equation}\label{eq:hankel} 
	\widehat{\mathbf{H}_\varphi f}(\eta) = \int_{\mathbb{D}} \widehat{f}(\xi) \widehat{\varphi}(\xi+\eta)\,d\xi, \qquad \eta \in \mathbb{D},
\end{equation}
on the dense subset of $\PW(\mathbb{D})$ comprised of functions $f$ such that $\widehat{f}$ is smooth and compactly supported in $\mathbb{D}$.

We are interested in the characterization of the symbols $\varphi$ such that $\mathbf{H}_\varphi$ extends by continuity to a bounded operator on $\PW(\mathbb{D})$. If $\varphi$ is in $L^\infty(\mathbb{R}^2)$, then clearly 
\begin{equation}\label{eq:basic} 
	\|\mathbf{H}_\varphi f\|_2 \leq \|f\|_2 \|\varphi\|_\infty. 
\end{equation}
Since $\xi+\eta$ is in $2\mathbb{D}$ whenever $\xi$ and $\eta$ are in $\mathbb{D}$, $\mathbf{H}_\varphi = \mathbf{H}_\psi$ for any $\psi$ such that the restrictions of $\widehat{\psi}$ and $\widehat{\varphi}$ to $2\mathbb{D}$ coincide (as distributions in $2\mathbb{D}$). We thus find that
\begin{equation}\label{eq:bsymbol} 
	\|\mathbf{H}_\varphi\| \leq \inf\big\{\|\psi\|_\infty \,:\, \widehat{\psi}\,\big|_{2\mathbb{D}} = \widehat{\varphi}\,\big|_{2\mathbb{D}}\big\}. 
\end{equation}
We say that the Hankel operator $\mathbf{H}_\varphi$ has a bounded symbol if the quantity on the right hand side of \eqref{eq:bsymbol} is finite. We have just demonstrated that if $\mathbf{H}_\varphi$ has a bounded symbol, then $\mathbf{H}_\varphi$ is bounded. We wish to explore the converse.
\begin{question}
	Does every bounded Hankel operator on $\PW(\mathbb{D})$ have a bounded symbol? 
\end{question}

In the classical one-dimensional setting, where the role of $\mathbb{D}$ is played by the half-line $\mathbb{R}_+ = [0,\infty)$, Nehari~\cite{Nehari57} gave a positive answer to this question. We therefore refer to affirmative answers to analogous questions as Nehari theorems. Our question for $\PW(\mathbb{D})$ was first raised implicitly by Rochberg~\cite[Sec.~7]{Rochberg87}, after he had proved that Nehari's theorem holds for the Paley--Wiener space $\PW(I)$ of a finite interval $I \subseteq \mathbb{R}$.

It was conditionally\footnote{The arguments in \cite{CP21} rely on Nehari's theorem for $\mathbb{R}_+ \times \mathbb{R}_+$ as a black box. It was long believed that the Nehari theorem had been proven in this setting, but a significant flaw was recently observed in the available reasoning. We refer to \cite[Section~10]{HTV21} for a detailed discussion.} shown in \cite{CP21} that the Nehari theorem holds for the Paley--Wiener space $\PW(\mathbb{P})$ of any convex polygon $\mathbb{P}$. However, in view of C. Fefferman's negative resolution \cite{Feff71} of the disc conjecture for the Fourier multiplier of a disc, it would not be surprising to see differing results for $\PW(\mathbb{P})$ and $\PW(\mathbb{D})$.

The main purpose of the present note is to establish the following.
\begin{theorem}\label{thm:main} 
	There is a bounded Hankel operator on $\PW(\mathbb{D})$ which does not have a bounded symbol. 
\end{theorem}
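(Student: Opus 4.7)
The plan is to prove Theorem \ref{thm:main} by a duality reformulation combined with a quantitative counterexample inspired by Fefferman's resolution \cite{Feff71} of the disc conjecture. Rather than exhibit a single operator directly, I would produce a sequence $(\varphi_N)$ of symbols with $\|\mathbf{H}_{\varphi_N}\|$ uniformly bounded while the infimum in \eqref{eq:bsymbol} diverges; the existence of a single bounded Hankel operator without a bounded symbol then follows by an open mapping / closed graph argument.

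First I would rewrite the sesquilinear form behind \eqref{eq:hankel} as
\begin{equation*}
	\langle \mathbf{H}_\varphi f, g\rangle_{L^2(\mathbb{R}^2)} = \int_{2\mathbb{D}} \widehat{\varphi}(\zeta)\,\big(\widehat{f} \ast \widetilde{\widehat{g}}\,\big)(\zeta)\, d\zeta, \qquad \widetilde{G}(\zeta) := \overline{G(-\zeta)}.
\end{equation*}
Thus $\|\mathbf{H}_\varphi\|$ equals the norm of $\widehat{\varphi}\big|_{2\mathbb{D}}$ as a linear functional on the weak factorization space $\mathcal{W} \subseteq L^1(2\mathbb{D})$ whose projective norm is $\|h\|_{\mathcal{W}} = \inf \sum_j \|f_j\|_2 \|g_j\|_2$, the infimum being taken over all representations $h = \sum_j \widehat{f_j} \ast \widetilde{\widehat{g_j}}$. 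The right-hand side of \eqref{eq:bsymbol} is the norm of the same functional with respect to the ambient $L^1(2\mathbb{D})$ norm, by $L^\infty = (L^1)^\ast$. So Theorem \ref{thm:main} is equivalent to showing that $\|\cdot\|_{\mathcal{W}}$ and $\|\cdot\|_{L^1(2\mathbb{D})}$ are not equivalent on $\mathcal{W}$.

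To exhibit the gap I would attach to each of $N$ uniformly spaced unit vectors $v_1,\dots,v_N \in \partial\mathbb{D}$ a thin pencil $P_j \subset 2\mathbb{D}$ of length $\sim 1$ and width $\sim 1/N$, tangent to $\partial(2\mathbb{D})$ at $2v_j$, and set $\widehat{\varphi_N} = \sum_j \varepsilon_j\, m_j$ for bumps $m_j$ adapted to $P_j$ and signs $\varepsilon_j$ chosen in Besicovitch fashion. The upper bound $\|\mathbf{H}_{\varphi_N}\| = O(1)$ would come from Plancherel, using the near-disjointness of the $P_j$ together with the shape of $\widehat{f} \ast \widetilde{\widehat{g}}$ near $\partial(2\mathbb{D})$ for $f, g \in \PW(\mathbb{D})$. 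A matching lower bound $\inf\{\|\psi\|_\infty\} \gtrsim (\log N)^\alpha$ would be extracted by testing $\widehat{\varphi_N}$ against a Kakeya-type $h_N \in L^1(2\mathbb{D})$ supported on tubes dual to the $(P_j)$.

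The main obstacle is this lower bound. One must faithfully transfer Fefferman's Kakeya/Besicovitch obstruction from the $L^p$-multiplier setting to the bilinear Hankel setting, and verify that the smooth curvature of $\partial\mathbb{D}$ — absent in the polygonal case of \cite{CP21} — is precisely what drives the divergence. Concretely, the phases $\varepsilon_j$ and the test function $h_N$ have to be chosen so that the tangential geometry near $\partial(2\mathbb{D})$ is not absorbed by the $\PW(\mathbb{D})$-side orthogonality inherent in the convolution $\widehat{f} \ast \widetilde{\widehat{g}}$.
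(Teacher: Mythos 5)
Your skeleton --- a sequence of symbols with $\|\mathbf{H}_{\varphi_N}\|$ uniformly bounded while the right-hand side of \eqref{eq:bsymbol} diverges, followed by a closed graph argument --- is exactly how the paper's proof is organized. Everything after that skeleton, however, is either incorrect or left open. First, your duality statement is wrong: the quantity $\inf\{\|\psi\|_\infty : \widehat{\psi}|_{2\mathbb{D}} = \widehat{\varphi}|_{2\mathbb{D}}\}$ is the norm of $\widehat{\varphi}$ acting on the Fourier algebra of $2\mathbb{D}$, that is, on restrictions $h = \widehat{g}\,|_{2\mathbb{D}}$ with $g \in L^1(\mathbb{R}^2)$ carrying the quotient norm $\inf\|g\|_{L^1(\mathbb{R}^2)}$ --- not on $L^1(2\mathbb{D})$. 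Boundedness of the functional $h \mapsto \langle h, \widehat{\varphi}\rangle$ on $L^1(2\mathbb{D})$ would only say $\widehat{\varphi} \in L^\infty(2\mathbb{D})$, which has nothing to do with $\varphi$ admitting a bounded symbol $\psi$ on $\mathbb{R}^2$. Second, the upper bound $\|\mathbf{H}_{\varphi_N}\| = O(1)$ is asserted but never proved, and for pencils of length $\sim 1$ it is geometrically doubtful: the output sets $\{\eta \in \mathbb{D} : \xi + \eta \in P_j \text{ for some } \xi \in \mathbb{D}\}$ then have angular width of order $1$ and overlap massively for large $N$, so ``Plancherel plus near-disjointness'' gives nothing, and sign patterns chosen ``in Besicovitch fashion'' do not obviously control an operator norm. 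Third, you concede that the lower bound is the main obstacle --- but that lower bound is the entire content of the theorem, so what you have is a research plan, not a proof; whether Fefferman's Kakeya obstruction can be transferred to this bilinear setting at all is left unaddressed.

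The paper's actual proof shows that no Kakeya machinery is needed; curvature enters through one elementary observation only. Take $\widehat{\varphi} = \sum_{j=1}^n \widehat{\varphi}_j$, where $\widehat{\varphi}_j$ is a bump of tiny radius $r = \min(r_0,(2/n)^2)$ centered at $(2-r)e^{i\theta_j}$ with $\theta_j = 2\pi(j-1)/n$. For $|w| \geq \sqrt{2}$, the lens $D(w) = \mathbb{D} \cap (w-\mathbb{D})$ is contained in the angular sector $|\arg\xi - \arg w| \leq \arccos(|w|/2)$, of width $O(\sqrt{r})$ for $w \in \supp\widehat{\varphi}_j$; hence the sets $D_{\varphi_j}$ are pairwise disjoint once $r \leq (2/n)^2$, so $\mathbf{H}_\varphi = \bigoplus_j \mathbf{H}_{\varphi_j}$ and $\|\mathbf{H}_\varphi\| = \max_j \|\mathbf{H}_{\varphi_j}\| \leq \|\widehat{b}_r\|_1 \leq \pi r^2$. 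The smallness $r \sim n^{-2}$ buys exact orthogonality, with all bumps carrying the same sign --- no cancellation is invoked. The lower bound needs no Kakeya test function either: testing against $f = \varphi$ itself gives $|\langle f,\varphi\rangle| = \|\widehat{f}\|_2^2 \geq \frac{\pi}{4}nr^2$, while rapid decay of $b$ yields $\|f\|_1 \leq C_\kappa n^{1/2+1/(2\kappa)}$, so the easy direction of duality (the paper's Lemma~\ref{lem:holder}) shows the symbol norm is at least $c_\kappa r^2 n^{1/2-1/(2\kappa)}$. Comparing with $\|\mathbf{H}_\varphi\| \leq \pi r^2$ produces a polynomial (not merely logarithmic) divergence of order $n^{1/2-\varepsilon}$, by an argument that is entirely elementary.
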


Minor modifications of our proof show that if $\mathbb{P}_n$ is an $n$-sided regular polygon, then the optimal constant in the inequality 
\[\inf\big\{\|\psi\|_\infty \,:\, \widehat{\psi}\,\big|_{2\mathbb{P}_n} = \widehat{\varphi}\,\big|_{2\mathbb{P}_n}\big\} \leq C_n\|\mathbf{H}_\varphi\|_{\PW(\mathbb{P}_n)}\]
satisfies $C_n \geq c_\varepsilon n^{1/2 - \varepsilon}$ for any fixed $\varepsilon  > 0$. Here $c_\varepsilon>0$ denotes a constant which depends only on $\varepsilon$. Conversely, the conditional argument of \cite{CP21} yields that $C_n \leq cn$ for some absolute constant $c > 0$. Analogous estimates for Fourier multipliers associated with polygons were considered in \cite{Cord77}.

Finally, let us remark that Ortega-Cerd\`a and Seip \cite{OCS12} have shown that Nehari's theorem also fails for (small) Hankel operators on the infinite-dimensional torus. However, Helson \cite{Hel06} proved that if the Hankel operator is in the Hilbert--Schmidt class $S_2$, then it is induced by a bounded symbol. We are led to the following.

\begin{question}
	Does every Hankel operator on $\PW(\mathbb{D})$ in $S_2$ have a bounded symbol?
\end{question}

In this context, we mention that Peng~\cite{Peng88} has characterized when $\mathbf{H}_\varphi$ is in the Schatten class $S_p$, for $1 \leq p \leq 2$, in terms of the membership of $\varphi$ in certain Besov spaces adapted to $2\mathbb{D}$. In particular, $\mathbf{H}_\varphi$ is in $S_2$ if and only if
\[\int_{2\mathbb{D}} |\widehat{\varphi}(\xi)|^2 (2-|\xi|)^{3/2}\,d\xi < \infty.\]

\section{Proof of Theorem~\ref{thm:main}}
If the Nehari theorem were to hold for $\PW(\mathbb{D})$, there would by the closed graph theorem exist an absolute constant $C < \infty$ such that 
\begin{equation}\label{eq:contradictme} 
	\inf\big\{\|\psi\|_\infty \,:\, \widehat{\psi}\,\big|_{2\mathbb{D}} = \widehat{\varphi}\,\big|_{2\mathbb{D}}\big\} \leq C\|\mathbf{H}_\varphi\| 
\end{equation}
for every bounded Hankel operator on $\PW(\mathbb{D})$. To prove Theorem~\ref{thm:main}, we will construct a sequence of symbols which demonstrates that no such $C<\infty$ can exist.

We begin with an upper bound for $\|\mathbf{H}_\varphi\|$. Guided by the following lemma, our plan is to construct $\varphi$ such that $\mathbf{H}_\varphi$ admits an orthogonal decomposition. For a symbol $\varphi$, define
\[D_\varphi = \big\{\eta \in \mathbb{D}\,:\, \xi+\eta \in \supp{\widehat{\varphi}} \,\text{ for some }\, \xi \in \mathbb{D}\big\}.\]
\begin{lemma}\label{lem:orthogonal} 
	Suppose that $\varphi = \varphi_1 + \varphi_2$ and that $D_{\varphi_1} \cap D_{\varphi_2} = \emptyset$. Then
	\[\mathbf{H}_\varphi = \mathbf{H}_{\varphi_1} \oplus \mathbf{H}_{\varphi_2}.\]
\end{lemma}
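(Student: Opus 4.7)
The plan is to produce a Fourier-side orthogonal decomposition of $\PW(\mathbb{D})$ that diagonalizes the splitting $\mathbf{H}_\varphi = \mathbf{H}_{\varphi_1} + \mathbf{H}_{\varphi_2}$. Two complementary observations about \eqref{eq:hankel} drive the argument. First, reading off the definition directly, $\widehat{\mathbf{H}_\varphi f}(\eta) = 0$ for every $\eta \in \mathbb{D} \setminus \overline{D_\varphi}$, since then no $\xi \in \mathbb{D}$ contributes to the integral because $\xi + \eta \notin \supp \widehat{\varphi}$. Second, exploiting the symmetry of $\widehat{\varphi}(\xi+\eta)$ in $\xi$ and $\eta$, the same reasoning shows that $\mathbf{H}_\varphi f = 0$ whenever $\widehat{f}$ is supported in $\mathbb{D} \setminus \overline{D_\varphi}$.

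Next I would set $\mathcal{H}_i := \{f \in \PW(\mathbb{D}) : \supp \widehat{f} \subseteq \overline{D_{\varphi_i}}\}$ for $i = 1,2$. The hypothesis $D_{\varphi_1} \cap D_{\varphi_2} = \emptyset$ guarantees that $\mathcal{H}_1 \perp \mathcal{H}_2$ in $L^2(\mathbb{R}^2)$, so we obtain an orthogonal decomposition
\[
\PW(\mathbb{D}) = \mathcal{H}_0 \oplus \mathcal{H}_1 \oplus \mathcal{H}_2, \qquad \mathcal{H}_0 := (\mathcal{H}_1 \oplus \mathcal{H}_2)^\perp.
\]
Applying the two observations above to each $\varphi_i$ separately, I will verify that $\mathbf{H}_{\varphi_i}$ maps $\PW(\mathbb{D})$ into $\mathcal{H}_i$ and annihilates every $f$ whose Fourier transform is supported outside $\overline{D_{\varphi_i}}$; in particular $\mathbf{H}_{\varphi_i}|_{\mathcal{H}_j} = 0$ for $i \neq j$ and $\mathbf{H}_{\varphi_i}|_{\mathcal{H}_0} = 0$. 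Combining, $\mathbf{H}_\varphi$ restricts to $\mathbf{H}_{\varphi_i}$ on $\mathcal{H}_i$ with range in $\mathcal{H}_i$, and to zero on $\mathcal{H}_0$, which is precisely the claimed orthogonal direct sum.

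The only subtlety is that $\widehat{\varphi}$ is a priori only a tempered distribution, so the pointwise support statements used above should be read as statements about the pairing of $\widehat{\varphi}$ with the smooth, compactly supported function $(\xi,\eta) \mapsto \widehat{f}(\xi)\overline{\widehat{g}(\eta)}$ after the change of variables $(\xi,\eta) \mapsto (\xi+\eta,\eta)$. With this distributional reading in place, the disjointness of $D_{\varphi_1}$ and $D_{\varphi_2}$ immediately yields the required vanishing statements, and the lemma follows.
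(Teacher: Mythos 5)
Your argument is correct, and it is in fact more complete than the paper's own proof --- in a way that matters for the application. The paper fixes $f$ with $\widehat{f}$ smooth and compactly supported, splits $\mathbf{H}_\varphi f = \mathbf{H}_{\varphi_1} f + \mathbf{H}_{\varphi_2} f$, and verifies only the range statement $\supp \widehat{\mathbf{H}_{\varphi_i} f} \subseteq D_{\varphi_i}$, whence $\mathbf{H}_{\varphi_1} f \perp \mathbf{H}_{\varphi_2} f$; that is the entire proof. You additionally establish the domain-side statement --- via the symmetry of $\widehat{\varphi}_i(\xi+\eta)$ in $(\xi,\eta)$, the operator $\mathbf{H}_{\varphi_i}$ annihilates every $f$ whose Fourier transform vanishes on $D_{\varphi_i}$ --- and you assemble from this the full block decomposition $\PW(\mathbb{D}) = \mathcal{H}_0 \oplus \mathcal{H}_1 \oplus \mathcal{H}_2$ on which $\mathbf{H}_\varphi$ acts as $0 \oplus \mathbf{H}_{\varphi_1} \oplus \mathbf{H}_{\varphi_2}$. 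This is not cosmetic: range orthogonality alone does not imply the norm identity $\|\mathbf{H}_\varphi\| = \max(\|\mathbf{H}_{\varphi_1}\|, \|\mathbf{H}_{\varphi_2}\|)$ that the paper deduces ``in particular'' from the lemma. For instance, $A_1 f = \langle f, e_1 \rangle e_1$ and $A_2 f = \langle f, e_1 \rangle e_2$ have orthogonal ranges, yet $\|A_1 + A_2\| = \sqrt{2} > \max(\|A_1\|,\|A_2\|)$. Range orthogonality only yields $\|\mathbf{H}_\varphi\| \leq \big(\sum_j \|\mathbf{H}_{\varphi_j}\|^2\big)^{1/2} \leq \sqrt{n}\, \pi r^2$ for the $n$-fold sum \eqref{eq:phidef}, and with that weaker bound the comparison of \eqref{eq:upperbound} and \eqref{eq:lowerbound} gives only $C \geq c_\kappa n^{-1/(2\kappa)}$, which is no contradiction at all. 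Your block-diagonal version is exactly what the norm identity, and hence the proof of Theorem~\ref{thm:main}, requires.

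Two small repairs to your write-up. First, define $\mathcal{H}_i$ by requiring $\widehat{f} = 0$ almost everywhere outside $D_{\varphi_i}$, rather than $\supp \widehat{f} \subseteq \overline{D_{\varphi_i}}$. The sets $D_{\varphi_i} = (\supp \widehat{\varphi}_i - \mathbb{D}) \cap \mathbb{D}$ are open, but two disjoint open sets can have closures whose intersection has positive measure, which would break the orthogonality $\mathcal{H}_1 \perp \mathcal{H}_2$ as you have set it up; with the almost-everywhere definition, disjointness of $D_{\varphi_1}$ and $D_{\varphi_2}$ gives orthogonality at once, and your two vanishing observations survive unchanged since they hold pointwise off the open sets $D_{\varphi_i}$. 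Second, the Fourier projections onto the $\mathcal{H}_i$ do not preserve the dense class of $f$ with $\widehat{f}$ smooth and compactly supported, so ``$\mathbf{H}_\varphi$ restricts to $\mathbf{H}_{\varphi_i}$ on $\mathcal{H}_i$'' should be read through the form $(f,g) \mapsto \langle \mathbf{H}_\varphi f, g \rangle$ on that dense class, or after extending the operators by continuity; your distributional remark in the final paragraph is the right device for this, and the point is routine.
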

\begin{proof}
	Let $f$ be any function in $\PW(\mathbb{D})$ such that $\widehat{f}$ is smooth and compactly supported in $\mathbb{D}$ . Since $\mathbf{H}_\varphi f = \mathbf{H}_{\varphi_1} f + \mathbf{H}_{\varphi_2} f$ by linearity of the integral \eqref{eq:hankel}, it is sufficient to demonstrate that $\mathbf{H}_{\varphi_1} f \perp \mathbf{H}_{\varphi_2} f$. It follows directly from the definition of the Hankel operator \eqref{eq:hankel} that
	\[\supp{\widehat{\mathbf{H}_{\varphi_1} f}} \subseteq D_{\varphi_1} \qquad \text{and} \qquad \supp{\widehat{\mathbf{H}_{\varphi_2} f}} \subseteq D_{\varphi_2}.\]
	By the assumption that $D_{\varphi_1} \cap D_{\varphi_2} = \emptyset$, we therefore conclude that
	\[\langle \mathbf{H}_{\varphi_1} f, \mathbf{H}_{\varphi_2} f \rangle = \langle \widehat{\mathbf{H}_{\varphi_1} f}, \widehat{\mathbf{H}_{\varphi_2} f} \rangle = 0. \qedhere\]
\end{proof}
In particular, if $D_{\varphi_1} \cap D_{\varphi_2}= \emptyset$, then
\[\|\mathbf{H}_\varphi \| = \max(\|\mathbf{H}_{\varphi_1}\|,\|\mathbf{H}_{\varphi_2}\|).\]

Let us next explain the construction of $\varphi$. Consider a radial smooth bump function $\widehat{b}$ which is bounded by $1$, equal to $1$ on $\frac{1}{2}\mathbb{D}$ and compactly supported in $\mathbb{D}$. For a real number $0<r < 1/2$, set $\widehat{b}_r(\xi) = \widehat{b}(\xi/r)$. Note that
\begin{equation}\label{eq:brest} 
	\|\widehat{b}_r \|_1 \leq \pi r^2. 
\end{equation}
For $j=1,2,\ldots,n$, we let $\widehat{\varphi}_j$ be the function obtained by translating $\widehat{b}_r$ by $2-r$ units in the direction $\theta_j = 2 \pi (j-1)/n$, as measured with respect to the positive $\xi_1$-axis in the $\xi_1\xi_2$-plane. We set
\begin{equation} \label{eq:phidef}
\varphi = \varphi_1 + \varphi_2 + \cdots + \varphi_n.
\end{equation}
Since $0 < r < 1/2$, it is clear that $\supp{\widehat{\varphi}} \subseteq 2\mathbb{D} \setminus \mathbb{D}$. Let $r_0 = 1-\frac{1}{\sqrt{2}}=0.29\ldots$.

\begin{figure}
	\centering
	\begin{tikzpicture}
		\begin{axis}[xmin=-2.1,xmax=12.1,ymin=-2.1,ymax=2.1, x=0.85cm, y=0.85cm, hide axis]		
				
			\def\r{1.1}		
			\def\s{5}		
			\def\rmid{1.5}	
			\def\rr{1.8}	
			
			\pgfmathsetmacro\x{\r/2}
			\pgfmathsetmacro\y{sqrt(1-(\r/2)^2)}
			
			\addplot[mark=*,mark size=1pt] coordinates {(\r,0)};
		
			\addplot[domain=0:361,samples=200,thin,color=gray]
				({cos(x)},{sin(x)});
			\addplot[domain=0:361,samples=200,thin,color=gray,densely dotted]
				({2*cos(x)},{2*sin(x)});
		
			\addplot[domain=(\r-1):1, thin, samples=200, draw opacity=0, name path=c1u] 
				{sqrt(1-(max(\r-x,x))^2)};
			\addplot[domain=(\r-1):1, thin, samples=200, draw opacity=0, name path=c1l] 
				{-sqrt(1-(max(\r-x,x))^2)};
			\addplot[gray!25] 
				fill between[of=c1u and c1l];
		
			\addplot[thin]
				coordinates {(\x,-\y) (0,0) (\x,\y)};
				
			\pgfmathsetmacro\x{\s+\rmid/2}
			\pgfmathsetmacro\y{sqrt(1-(\rmid/2)^2)}
			\pgfmathsetmacro\r{\s+\rmid}
			
			\addplot[mark=*,mark size=1pt] coordinates {(\r,0)};
		
			\addplot[domain=0:361,samples=200,thin,color=gray]
				({\s+cos(x)},{sin(x)});
			\addplot[domain=0:361,samples=200,thin,color=gray,densely dotted]
				({\s+2*cos(x)},{2*sin(x)});
			
			\addplot[domain=(\r-1):(\s+1), thin, samples=200, draw opacity=0, name path=c1u] 
				{sqrt(1-(max(\r-x,x-\s))^2)};
			\addplot[domain=(\r-1):(\s+1), thin, samples=200, draw opacity=0, name path=c1l] 
				{-sqrt(1-(max(\r-x,x-\s))^2)};
			\addplot[gray!25] 
				fill between[of=c1u and c1l];
		
			\addplot[thin]
				coordinates {(\x,-\y) (\s,0) (\x,\y)};
				
			\pgfmathsetmacro\x{2*\s+\rr/2}
			\pgfmathsetmacro\y{sqrt(1-(\rr/2)^2)}
			\pgfmathsetmacro\r{2*\s+\rr}
			
			\addplot[mark=*,mark size=1pt] coordinates {(\r,0)};
		
			\addplot[domain=0:361,samples=200,thin,color=gray]
				({2*\s+cos(x)},{sin(x)});
			\addplot[domain=0:361,samples=200,thin,color=gray,densely dotted]
				({2*\s+2*cos(x)},{2*sin(x)});
			
			\addplot[domain=(\r-1):(2*\s+1), thin, samples=200, draw opacity=0, name path=c1u] 
				{sqrt(1-(max(\r-x,x-2*\s))^2)};
			\addplot[domain=(\r-1):(2*\s+1), thin, samples=200, draw opacity=0, name path=c1l] 
				{-sqrt(1-(max(\r-x,x-2*\s))^2)};
			\addplot[gray!25] 
				fill between[of=c1u and c1l];
		
			\addplot[thin]
				coordinates {(\x,-\y) (2*\s,0) (\x,\y)};
					
		\end{axis}
	\end{tikzpicture}
	\caption{Plots of $D(w)$ and the corresponding disc sector from the proof of Lemma~\ref{lem:rsmall}, for $w=1.1$, $w=1.5$, and $w = 1.8$.}
	\label{fig:sector}
\end{figure}
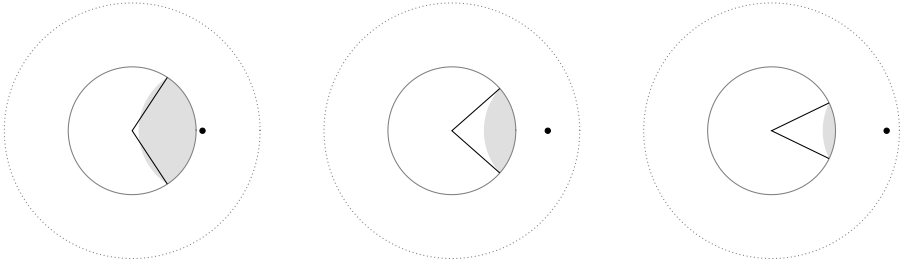

\begin{lemma}\label{lem:rsmall} 
	If $n \geq 2$ and $r=\min(r_0,(2/n)^2)$, then
	\[D_{\varphi_j} \cap D_{\varphi_k} = \emptyset\]
	for every $1 \leq j \neq k \leq n$. 
\end{lemma}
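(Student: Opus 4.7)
The plan is to show that each $D_{\varphi_j}$ is contained in an angular sector at the origin of half-angle $\alpha$ (depending only on $r$) centered in the direction $\theta_j$, and then to verify $\alpha < \pi/n$ under the given hypothesis on $r$. Put $w_j = (2-r) e^{i\theta_j}$, so that $\supp \widehat{\varphi}_j \subseteq \overline{B(w_j, r)}$ and any $\eta \in D_{\varphi_j}$ must satisfy both $|\eta| < 1$ and $|\eta - w_j| \leq 1 + r$. Hence $D_{\varphi_j}$ lies in the lens $L_j := B(w_j, 1+r) \cap \mathbb{D}$ depicted in Figure~\ref{fig:sector}. Solving $|e^{i\phi} - w_j| = 1+r$ shows that the two bounding circles meet at the points of argument $\theta_j \pm \alpha$, where
\[
\cos \alpha = \frac{2 - 3r}{2 - r}, \qquad \text{equivalently}, \qquad \tan^2(\alpha/2) = \frac{r}{2(1-r)}.
\]

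The key step is to upgrade this boundary calculation to the sector containment
\[
L_j \subseteq \{\rho e^{i\phi} : 0 < \rho \leq 1,\ |\phi - \theta_j| \leq \alpha\}.
\]
For fixed $\rho \in (1-2r, 1]$, the slice $L_j \cap \{|\eta| = \rho\}$ is the arc of half-angle
\[
\beta(\rho) = \arccos \frac{\rho^2 + (2-r)^2 - (1+r)^2}{2\rho(2-r)}.
\]
Differentiation shows that the argument of $\arccos$ has derivative with the sign of $\rho^2 - 3(1-2r)$; since $r < 1/3$ in both regimes considered, this is negative throughout $(1-2r,1]$, so $\beta$ is monotone increasing and $\beta(\rho) \leq \beta(1) = \alpha$. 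It then remains only to verify $\alpha < \pi/n$, equivalently $r/(2(1-r)) < \tan^2(\pi/(2n))$.

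When $r = r_0 = 1 - 1/\sqrt{2}$ (relevant for $n \in \{2,3\}$), the left-hand side equals $(\sqrt{2}-1)/2 \approx 0.21$, safely below $\tan^2(\pi/6) = 1/3$ and $\tan^2(\pi/4) = 1$. When $r = 4/n^2$ (relevant for $n \geq 4$), it simplifies to $2/(n^2 - 4)$; combining $\tan x > x$ with $\pi^2 > 8$ handles $n \geq 5$, while the borderline case $n = 4$ reduces to $1/6 < \tan^2(\pi/8) = 3 - 2\sqrt{2}$, i.e.\ $288 < 289$.

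The main obstacle I anticipate is the sector containment: a priori the lens $L_j$ could be considerably wider in argument than the arc it cuts out of $\partial \mathbb{D}$, since part of its boundary lies on the non-radial arc $\partial B(w_j, 1+r) \cap \mathbb{D}$. The monotonicity of $\beta(\rho)$ is what rules this out and compresses the whole lemma into one elementary trigonometric inequality relating $r$ and $n$.
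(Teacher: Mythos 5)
Your proof is correct, and although it follows the same broad strategy as the paper (trap each $D_{\varphi_j}$ in an angular sector about the direction $\theta_j$, then make the sectors disjoint), the technical core is genuinely different --- and sharper. The paper works point by point: for each $w \in \supp\widehat{\varphi}_j$ it considers the small lens $D(w) = \mathbb{D} \cap B(w,1)$, uses the law of cosines to show that $D(w)$ lies in the sector spanned by its two corners provided $|w| \geq \sqrt{2}$ (this is where $r_0$ enters), and then bounds the union of the resulting angular intervals by the decoupled estimate $\arccos(1-r) + \arctan\bigl(r/(2-r)\bigr) \leq 3\sqrt{r}$. You instead absorb everything into the single lens $L_j = \mathbb{D} \cap B(w_j, 1+r)$ --- which loses nothing, since $L_j$ is exactly the union of the small lenses $\mathbb{D} \cap B(w,1)$ over $w \in \overline{B(w_j,r)}$ --- and prove the sector containment by monotonicity of the slice half-angle $\beta(\rho)$, with the hypothesis $r < 1/3$ playing the role of $|w| \geq \sqrt{2}$. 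This yields the \emph{exact} sector half-angle $\alpha$, $\tan^2(\alpha/2) = r/(2(1-r))$, rather than an upper bound for it, and the exactness is not a luxury: with $r = (2/n)^2$ the margin is razor-thin (your $288 < 289$ at $n=4$), so crude bounds cannot close it. Indeed, the paper's own final step does not close as written: its intervals have half-width bounded by $3\sqrt{r} \leq 6/n$, while disjointness of intervals whose centers are $2\pi/n$ apart requires half-width at most $\pi/n$, and $6 > \pi$; the appeal to $\pi > 3$ compares $6/n$ against the full gap $2\pi/n$ instead of the half-gap. (Even the unrelaxed bound $\arccos(1-r)+\arctan\bigl(r/(2-r)\bigr)$ exceeds $\pi/n$ for $4 \leq n \leq 7$, because it adds maxima attained at different points $w$.) This is harmless for Theorem~\ref{thm:main}, since the factors $r^2$ cancel between \eqref{eq:upperbound} and \eqref{eq:lowerbound}, so one could simply take $r = \min(r_0, n^{-2})$; but for Lemma~\ref{lem:rsmall} with the stated value of $r$, your argument is the one that actually proves it.

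One detail in your write-up should be tightened. For $n \geq 5$, after applying $\tan x > x$ the inequality you need is $2/(n^2-4) \leq \pi^2/(4n^2)$, that is, $(\pi^2 - 8)\,n^2 \geq 4\pi^2$; the bare fact $\pi^2 > 8$ is not quantitatively sufficient, since at $n=5$ this requires $\pi^2 \geq 200/21 = 9.52\ldots$. The inequality is true and the condition is monotone in $n$, so verifying $n = 5$ (using, say, $\pi^2 > 9.6$) settles all $n \geq 5$; just cite the correct numerical input.
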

\begin{proof}
	Throughout this proof, we identify $\mathbb{R}^2$ with $\mathbb{C}$. We consider first a simpler situation. For a point $w$ in $2 \mathbb{D} \setminus \mathbb{D}$, let
	\[D(w) = \big\{\eta \in \mathbb{D}\,:\, \xi + \eta = w\, \text{ for some }\, \xi \in \mathbb{D}\big\}.\]
	In other words, $D(w)$ is the intersection of the discs defined by $|\xi|<1$ and $|w-\xi|<1$. To find the intersection of the corresponding circles, we set $\xi = e^{i\theta}$ and let $\theta^{\pm}$ denote the solutions of the equation
	\[1 = |w-e^{i\theta}| \qquad \Longleftrightarrow \qquad \theta^{\pm} = \arg{w} \pm \arccos\left(\frac{|w|}{2}\right).\]
	Let $P_0$ denote the origin, $P_\pm$ the points $e^{i\theta^\pm}$, and $P_w$ the point $w$. The law of cosines implies that the angle $\angle P_0 P_\pm P_w$ is greater than or equal to $\pi/2$ if and only if $|w|\geq \sqrt{2}$. If this holds, then the intersection of the two discs is contained in the disc sector defined by the origin and the two points $P_\pm$. See Figure~\ref{fig:sector}.
	
	Suppose therefore that $|w|\geq \sqrt{2}$ and set $I(w) = (\theta^-,\theta^+)$. If $\xi$ is in $D(w)$, we have just seen that $\arg{\xi}$ is in $I(w)$. It follows that if $w_1$ and $w_2$ are points in $2\mathbb{D} \setminus \sqrt{2}\mathbb{D}$, then 
	\begin{equation}\label{eq:IimpliesS} 
		I(w_1) \cap I(w_2) = \emptyset \qquad \implies \qquad D(w_1) \cap D(w_2) = \emptyset. 
	\end{equation}
	Our goal is now to estimate
	\[I_{\varphi_j} = \bigcup_{w \in \supp{\widehat{\varphi}_j}} I(w).\]
	Since $\supp{\widehat{\varphi}_j}$ is contained in a disc with center $(2-r) e^{i\theta_j}$ and radius $r$, straightforward geometric arguments show that if $w$ is in $\supp{\widehat{\varphi}_j}$, then
	\[|w| \geq 2(1-r) \qquad \text{and} \qquad |\arg{w}-\theta_j| \leq \arctan\left(\frac{r}{2-r}\right).\]
	To ensure that $|w|\geq \sqrt{2}$ we require that $r \leq r_0 = 1-\frac{1}{\sqrt{2}}$. Moreover, if $\theta^\pm$ correspond to the point $w$ as above, then 
	\[|\theta^\pm-\theta_j| \leq \arccos(1-r) + \arctan\left(\frac{r}{2-r}\right) \leq 2 \sqrt{r} + r \leq 3 \sqrt{r}.\]
	Here we used that $2-r \geq 1$ and that $\arctan{r}\leq r$ for $0\leq r \leq 1$. This shows that
	\[I_{\varphi_j} \subseteq \left(\theta_j-3\sqrt{r},\theta_j+3\sqrt{r}\right).\]
	Since $|\theta_j-\theta_k| \geq 2 \pi /n$ for every $1 \leq j \neq k \leq n$ and since $\pi>3$, it follows that if we choose $r = \min(r_0,(\frac{2}{n})^2)$, then we guarantee that $I_{\varphi_j} \cap I_{\varphi_k}= \emptyset$ for every $1 \leq j \neq k \leq n$. The proof is completed by appealing to \eqref{eq:IimpliesS}. 
\end{proof}

Let $\varphi$ be as in \eqref{eq:phidef}, with $n\geq2$ and $r=\min(r_0,(2/n)^2)$. It then follows from Lemma~\ref{lem:orthogonal}, Lemma~\ref{lem:rsmall}, \eqref{eq:basic} and \eqref{eq:brest} that 
\begin{equation}\label{eq:upperbound} 
	\|\mathbf{H}_\varphi\| = \|\mathbf{H}_{\varphi_j}\| \leq \|\varphi_j\|_\infty \leq \|\widehat{\varphi}_j\|_1 = \|\widehat{b}_r \|_1 \leq \pi r^2.
\end{equation}

A lower bound for the left hand side in \eqref{eq:contradictme} will be established through duality.
\begin{lemma}\label{lem:holder} 
	Suppose that $\widehat{f}$ is smooth and compactly supported in $2 \mathbb{D}$. Then
	\[\frac{|\langle \widehat{f}, \widehat{\varphi} \rangle |}{\|f\|_1} \leq \inf\big\{\|\psi\|_\infty \,:\, \widehat{\psi}\,\big|_{2\mathbb{D}} = \widehat{\varphi}\,\big|_{2\mathbb{D}}\big\}.\]
\end{lemma}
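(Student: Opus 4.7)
The plan is a standard duality argument, as the lemma's name suggests, carried out in three quick steps.

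First, I assume the infimum on the right is finite and fix an admissible $\psi \in L^\infty(\mathbb{R}^2)$ with $\widehat{\psi}\,|_{2\mathbb{D}} = \widehat{\varphi}\,|_{2\mathbb{D}}$; otherwise the inequality is vacuous. Because $\widehat{f}$ is a test function compactly supported in the open set $2\mathbb{D}$, the distributional pairing in the numerator is determined by $\widehat{\varphi}\,|_{2\mathbb{D}}$ alone, so I may replace $\widehat{\varphi}$ by $\widehat{\psi}$ without affecting the value:
\[\langle \widehat{f}, \widehat{\varphi}\rangle = \langle \widehat{f}, \widehat{\psi}\rangle.\]

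Second, I transfer to the spatial side by Parseval's identity for the pairing of a Schwartz function with a tempered distribution, giving $\langle \widehat{f}, \widehat{\psi}\rangle = \langle f, \psi\rangle$. This is legal because $f$ is in the Schwartz class (as the inverse Fourier transform of a smooth compactly supported function) and $\psi \in L^\infty(\mathbb{R}^2) \subseteq \mathcal{S}'(\mathbb{R}^2)$. Third, since $f \in \mathcal{S} \subseteq L^1(\mathbb{R}^2)$, H\"older's inequality yields
\[|\langle f,\psi\rangle| \leq \|f\|_1\,\|\psi\|_\infty.\]
Dividing by $\|f\|_1$ and taking the infimum over admissible $\psi$ completes the argument.

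There is no genuine obstacle here; each step is classical. The only point worth flagging is a consistency check that the Fourier convention fixed by \eqref{eq:hankel} makes the Parseval step introduce no multiplicative constant (any such constant would simply propagate through the eventual contradiction unchanged, so nothing essential depends on this). The role of the lemma in the overall strategy is clear: it converts the lower bound needed on the right-hand side of \eqref{eq:contradictme} into a concrete task of exhibiting a test function $f$ for which $|\langle \widehat{f}, \widehat{\varphi}\rangle|/\|f\|_1$ is large compared with the upper bound \eqref{eq:upperbound} on $\|\mathbf{H}_\varphi\|$.
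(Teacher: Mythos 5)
Your proof is correct and follows essentially the same route as the paper's: both use the support condition to replace $\widehat{\varphi}$ by $\widehat{\psi}$ in the pairing, Parseval's identity to pass to $\langle f, \psi \rangle$, and H\"older's inequality $|\langle f, \psi\rangle| \leq \|f\|_1 \|\psi\|_\infty$ before taking the infimum over admissible $\psi$. The additional care you take (handling the vacuous case and justifying the Schwartz--tempered-distribution pairing) merely makes explicit what the paper leaves implicit.
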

\begin{proof}
	Obviously,
	\[\frac{|\langle f, \psi \rangle |}{\|f\|_1} \leq \|\psi\|_\infty,\]
	and when  $\widehat{f}$ is supported in $2\mathbb{D}$ and $\widehat{\psi}|_{2\mathbb{D}} = \widehat{\varphi}|_{2\mathbb{D}}$, we have that
	\[\langle f, \psi \rangle = \langle \widehat{f}, \widehat{\psi} \rangle = \langle \widehat{f},\widehat{\varphi} \rangle. \qedhere\]
\end{proof}

We now need to choose a test function $f$ adapted to the symbol $\varphi$ of \eqref{eq:phidef}. It turns out that  $f=f_1+f_2+\cdots+f_n$, where $f_j = \varphi_j$ for $j=1,2,\ldots,n$, will do. By our choice of $n\geq2$ and $r=\min(r_0,(2/n)^2)$ it is clear that $\supp{\widehat{f}_j} \cap \supp{\widehat{f}_k}=\emptyset$ for every $1 \leq j \neq k \leq n$, since the converse statement would contradict Lemma~\ref{lem:rsmall}. 

Exploiting this, we find that 
\begin{equation}\label{eq:ipest} 
	|\langle f, \varphi \rangle | = \|f\|_2^2 = \|\widehat{f}\|_2^2 = n \|\widehat{b}_r\|_2^2 \geq \frac{\pi}{4} n r^2. 
\end{equation}
To get an upper bound for $\|f\|_1$, we split the integral at some $R>0$,
\[\|f\|_1 = \int_{|x| \leq R} |f(x)|\,dx + \int_{|x| > R} |f(x)|\,dx = I_1 + I_2.\]
For the first integral, we use the Cauchy--Schwarz inequality,
\[I_1 \leq \sqrt{\pi} R \bigg(\int_{|x| \leq R} |f(x)|^2\,dx\bigg)^\frac{1}{2} \leq \sqrt{\pi} R \|f\|_2 = \sqrt{\pi} R \|\widehat{f}\|_2 \leq \pi R \sqrt{n} r,\]
where we again exploited that $\supp{\widehat{f}_j} \cap \supp{\widehat{f}_k} = \emptyset$ for $1 \leq j \neq k \leq n$. For the second integral, we note that $b$ is rapidly decaying, since $\widehat{b}$ is smooth and compactly supported. In particular, for every $\kappa \geq1$ there is a constant $A_\kappa$ such that 
\begin{equation}\label{eq:best} 
	\int_{|x| > \varrho} |b(x)| \,dx \leq \frac{A_\kappa}{\varrho^{\kappa-1}}, 
\end{equation}
holds for every $\varrho>0$. We constructed $\widehat{f_j}$ by translating $\widehat{b}_r$ by $2-r$ units in direction $\theta_j$, so there is a unimodular function $g_j$ such that
\[f_j(x) = g_j(x) b_r(x) = g_j(x) r^2 b(rx).\]
Thus $|f(x)| \leq n r^2 b(rx)$ and \eqref{eq:best}, with $\varrho = Rr$, yields 
\[I_2 \leq n \int_{|x| > R} r^2 |b(rx)|\,dx = n \int_{|x| > rR} |b(x)|\,dx \leq A_\kappa \frac{n}{(Rr)^{\kappa-1}}.\]
Combining our estimates for $I_1$ and $I_2$ and choosing $R = n^{1/(2\kappa)}/r$, we find that 
\begin{equation}\label{eq:f1est} 
	\|f\|_1 = I_1 + I_2 \leq (\pi + A_\kappa) n^{1/2+1/(2\kappa)}. 
\end{equation}
Inserting the estimates \eqref{eq:ipest} and \eqref{eq:f1est} into Lemma~\ref{lem:holder}, we obtain 
\begin{equation}\label{eq:lowerbound} 
	\frac{\pi r^2 n^{1/2-1/(2\kappa)}}{4(\pi + A_\kappa)} \leq \inf\big\{\|\psi\|_\infty \,:\, \widehat{\psi}\,\big|_{2\mathbb{D}} = \widehat{\varphi}\,\big|_{2\mathbb{D}}\big\}. 
\end{equation}

\begin{proof}[Final part of the proof of Theorem~\ref{thm:main}]
	To finish the proof of Theorem~\ref{thm:main}, we combine \eqref{eq:upperbound} and \eqref{eq:lowerbound} to conclude that the constant $C$ in \eqref{eq:contradictme} must satisfy
	\[\frac{n^{1/2-1/(2\kappa)}}{4(\pi+A_\kappa)} \leq C\]
	for any fixed $\kappa\geq1$ and every integer $n\geq2$. Choosing some $\kappa>1$ and letting $n\to \infty$ we obtain a contradiction.
\end{proof}

\bibliographystyle{amsplain} 
\bibliography{discohankel} 

\end{document}